\newcommand{\gpr}{g_\mathrm{prod}}
\newcommand{\gr}{g_\mathrm{round}}
\renewcommand{\v}{\mathrm{v}}
\newcommand{\dd}{\mathrm{d}}
\newcommand{\vol}{\operatorname{vol}}
\newcommand{\Vol}{\operatorname{Vol}}
\newcommand{\Met}{\operatorname{Met}}
\newcommand{\spec}{\operatorname{spec}}
\newcommand{\scal}{\mathrm{scal}}
\renewcommand{\H}{\mathds H}
\newcommand{\R}{\mathds R}
\renewcommand{\S}{\mathds S}
\newcommand{\N}{\mathds N}
\newcommand{\SO}{\mathsf{SO}}
\newcommand{\gen}{\operatorname{gen}}
\newtheorem{theorem}{Theorem}[]
\newtheorem{lemma}[theorem]{Lemma}
\newtheorem{proposition}[theorem]{Proposition}
\newtheorem{corollary}[theorem]{Corollary}
\newtheorem*{prob}{Singular Yamabe Problem}
\newtheorem*{mainthm}{Main Theorem}
\newtheorem*{theorem*}{Theorem}
\theoremstyle{definition}
\theoremstyle{remark}
\newtheorem{remark}[theorem]{Remark}
\title[Bifurcation of singular Yamabe problem on spheres]{Bifurcation of periodic solutions to the singular Yamabe problem on spheres}
\author[R. G. Bettiol]{Renato G. Bettiol}
\author[P. Piccione]{Paolo Piccione}
\author[B. Santoro]{Bianca Santoro}
\address{
\begin{tabular}{lll}
University of Pennsylvania & &Universidade de S\~ao Paulo \\
David Rittenhouse Lab. & & Departamento de Matem\'atica \\
Department of Mathematics & & Rua do Mat\~ao, 1010 \\
209 South 33rd St & & S\~ao Paulo, SP, 05508-090, Brazil\\
Philadelphia, PA, 19104-6395, USA & & \emph{E-mail address}: {\tt piccione@ime.usp.br}\\
\emph{E-mail address}: {\tt rbettiol@math.upenn.edu} && \\
\end{tabular}
\bigskip
\hfill\break\hfill\indent
\begin{tabular}{lll}
The City College of New York and Graduate Center, CUNY&&\\
Mathematics Department&&\\
138th St and Convent Avenue, NAC 4/112B&&\\
New York, NY, 10031, USA&&\\
\emph{E-mail address}: {\tt bsantoro@ccny.cuny.edu} & &
\end{tabular}
}
\numberwithin{equation}{section}
\numberwithin{theorem}{section}
\thanks{The first named author is supported by the NSF grant DMS-1209387, USA. The second named author is partially supported by Fapesp and CNPq, Brazil. The third named author is partially supported by the NSF grant DMS-1007155 and PSC-CUNY grants, USA}
\subjclass[2010]{58J55, 58E11, 58E15, 53A30, 53C21, 35J60}
\date{\today}
\begin{document}
\begin{abstract}
We obtain uncountably many periodic solutions to the singular Yamabe problem on a round sphere, that blow up along a great circle. These are (complete) constant scalar curvature metrics on the complement of $\S^1$ inside $\S^m$, $m\geq5$, that are conformal to the round (incomplete) metric and \emph{periodic} in the sense of being invariant under a discrete group of conformal transformations. These solutions come from bifurcating branches of constant scalar curvature metrics on compact quotients of $\S^m\setminus \S^1\cong \S^{m-2}\times \H^{2}$.
\end{abstract}

\maketitle

\section{Introduction}
A major achievement in Geometric Analysis was the complete solution of the Yamabe problem, which asserts that any compact Riemannian manifold $(M,g)$ of dimension $\dim M=m\geq3$ admits a metric $g'$ that has constant scalar curvature and is conformal to $g$. Attempts to generalize this statement in many directions have provided a fertile research area in the last decades. One such direction is to drop the compactness assumption on $M$. In this case, it is natural to only consider \emph{complete} metrics, even though every conformal class obviously contains both complete and incomplete metrics. A well-studied version of this question, where the geometry at infinity is relatively tame, is the following:

\begin{prob}
Let $(M,g)$ be a compact Riemannian manifold and $\Lambda\subset M$ be a closed subset. Find a complete metric $g'$ on $M\setminus\Lambda$ that has constant scalar curvature and is conformal to $g$.
\end{prob}

Although considerable progress was made in the general case, for many reasons the above problem is especially interesting in the particular case in which $(M,g)$ is the unit round sphere $(\S^m,\gr)$, see Schoen~\cite[\textsection 5]{schoen89}. This situation was initially considered in the 1970's by Loewner and Nirenberg \cite{ln}, that obtained existence of solutions with $\scal<0$ in some cases where the Hausdorff dimension of $\Lambda$ is $\geq(m-2)/2$. Most of the subsequent contributions to the problem, up to present days, assume that $\Lambda$ is smooth, with remarkable exceptions due to Finn~\cite{finn1,finn2}. Under this assumption, Aviles and McOwen \cite{am2} proved that for a general $(M,g)$, a solution with $\scal<0$ exists if and only if $\dim \Lambda >(m-2)/2$, in which case one also has uniqueness of solutions and regularity results, see Mazzeo~\cite{mazzeo}.

Analogously to the classical Yamabe problem, the case $\scal\geq0$ is considerably more involved. The first major breakthroughs were obtained by Schoen \cite{schoen88} and Schoen and Yau \cite{sy}. The latter established that if $\S^m\setminus\Lambda$ admits a complete metric with scalar curvature bounded below by a positive constant, then the Hausdorff dimension of $\Lambda$ is $\leq(m-2)/2$, and the former constructed several examples of domains $\S^m\setminus\Lambda$ that admit complete conformally flat metrics with constant positive scalar curvature, including the case where $\Lambda$ is any finite set with at least two points. This existence result was greatly generalized by Mazzeo and Pacard~\cite{mp1,mp2}, allowing $\Lambda$ to be a disjoint union of submanifolds with dimensions between $1$ and $(m-2)/2$ when $(M,g)$ is a general compact manifold with constant nonnegative scalar curvature, and between $0$ and $(m-2)/2$ in case $(M,g)=(\S^m,\gr)$.

Some of the first solutions to the Singular Yamabe Problem on $\S^m\setminus \Lambda$ were constructed by lifting solutions to the classical Yamabe problem from compact quotients. In these constructions, $\Lambda$ is the limit set of a Kleinian group, hence either a round subsphere $\S^k\subset \S^m$ or totally unrectifiable. Since the corresponding metrics on $\S^m\setminus \Lambda$ are invariant under a discrete (cocompact) group of conformal transformations, we slightly abuse terminology and call these \emph{periodic solutions}.

The purpose of the present paper is to apply bifurcation techniques to obtain many families of new periodic solutions when $\Lambda=\S^1$. Our central result is:

\begin{mainthm}
There exist uncountably many \emph{branches} of periodic solutions to the singular Yamabe problem on $\S^m\setminus \S^1$, for all $m \geq 5$, having (constant) scalar curvature arbitrarily close to $(m-4)(m-1)$.
\end{mainthm}

The starting point for constructing such solutions is the existence of \emph{trivial} periodic solutions in the case $\Lambda$ is a round subsphere $\S^k\subset \S^m$, cf.\ \cite{mazzeo-smale}. Through stereographic projection using a point in $\S^k$, there is a conformal equivalence $\S^m\setminus \S^k\cong \R^m\setminus \R^k$. Consider the flat metric in the latter, given in cylindrical coordinates by $\dd r^2+r^2\dd \theta^2+\dd y^2$, where $y$ is the coordinate in $\R^k$ and $(r,\theta)$ are polar coordinates in its orthogonal complement $\R^{m-k}$. Dividing by $r^2$, we obtain that this metric is conformal to
\begin{equation*}
\gpr=\dd \theta^2+\frac{\dd r^2+\dd y^2}{r^2}=\frac{\dd r^2+r^2\dd \theta^2+\dd y^2}{r^2},
\end{equation*}
which is clearly the ordinary product metric on $\S^{m-k-1}\times \H^{k+1}$, with the hyperbolic metric written in the upper half-plane model.
Alternatively, recall that the subset of $\S^m$ at maximal distance from any round subsphere $\S^k$ is another round subsphere $\S^{m-k-1}$, and both submanifolds have trivial normal bundle.\footnote{This is the usual decomposition of $\S^m$ as the spherical join $\S^k * \S^{m-k-1}$.} In particular, the exponential map is a diffeomorphism between $\S^m\setminus \S^k$ and the (trivial) normal disk bundle $D(\S^{m-k-1})=\S^{m-k-1}\times D^{k+1}$. The pull-back of $\gr$ is a doubly warped product metric on $[0,\tfrac{\pi}{2})\times\S^{m-k-1}\times\S^k$. Dividing by the warping function of $\S^{m-k-1}$ and reparametrizing the $[0,\tfrac{\pi}{2})$ coordinate, one reobtains the product metric on $\S^{m-k-1}\times\H^{k+1}$, with the hyperbolic metric now written in the rotationally symmetric (Poincar\'e disk) model.

Thus, $\gpr$ is a (smooth) solution to the Singular Yamabe Problem, since it is conformal to the (incomplete) round metric on $\S^m\setminus \S^k$ and has constant scalar curvature equal to:
\begin{equation*}
\scal_{m,k}:=(m-k-1)(m-k-2)-(k+1)k=(m-2k-2)(m-1).
\end{equation*}
In particular, notice that $\scal_{m,k}>0$ precisely when $k<(m-2)/2$. Mazzeo and Smale~\cite{mazzeo-smale} used these trivial solutions to prove existence of infinitely many other solutions with scalar curvature $\scal_{m,k}$, perturbing the subsphere $\S^k\subset \S^m$ with a diffeomorphism of $\S^m$ close to the identity. These were the first nonperiodic solutions to be found on $\S^m\setminus \S^k$.

In order to find new periodic solutions, we concentrate on the exceptional case $k=1$, which allows for paths of nonisometric\footnote{If $k\geq2$, then any compact quotients $\S^{m-k-1}\times\Sigma^{k+1}$ of $\S^m\setminus\S^k$ that have isomorphic fundamental groups must be isometric, by Mostow's Rigidity Theorem.} compact quotients of $\S^m\setminus \S^1\cong \S^{m-2}\times \H^{2}$. These arise from paths of (cocompact) lattices $\Gamma_t$ on $\H^2$, which induce paths of hyperbolic surfaces $\Sigma_t=\H^2/\Gamma_t$, whose product with $\S^{m-2}$ gives paths of closed manifolds $\S^{m-2}\times\Sigma_t$.
We prove that if the path $\Sigma_t$ of hyperbolic surfaces degenerates in an appropriate way, then there exist nonproduct constant scalar curvature metrics on $\S^{m-2}\times \Sigma$ which \emph{bifurcate} from $\S^{m-2}\times\Sigma_t$. These lift to metrics on $\S^m\setminus \S^1$ which are conformal to $\gpr$ (and hence to $\gr$), providing new periodic solutions with \emph{varying period} $\Gamma_t$. We obtain a very large quantity of solutions, with very general periods, since for any lattice $\Gamma_0$ on $\H^2$ we find a path $\Gamma'_t$ of lattices (starting arbitrarily close to $\Gamma_0$) such that bifurcation occurs along $\S^{m-2}\times(\H^2/\Gamma'_t)$, see Theorem~\ref{thm:curvesbifurcating}. Moreover, this can be done with lattices corresponding to hyperbolic surfaces of any desired genus $\geq2$.

Our bifurcating solutions have positive scalar curvature, and since $\scal_{m,1}>0$ if and only if $m\ge5$, the range for $m$ considered in the above theorem is the largest possible. Regarding the case $m=4$, notice that $\scal_{4,1}=0$ and hence any two constant scalar curvature metrics on a compact quotient of $(\S^{2}\times \H^2,\gpr)$ must be homothetic \cite[p.\ 175]{aubin}. Thus, $\gpr$ is the unique periodic solution on $\S^4\setminus \S^1$.

Bifurcation of constant scalar curvature metrics from paths of product metrics on $\S^{m-2}\times \Sigma_t$ is obtained via a classical variational bifurcation criterion in terms of the Morse index of such metrics (as critical points of the Hilbert-Einstein functional). This Morse index can be computed explicitly in terms of eigenvalues of the Laplacian of $\Sigma_t$. Informed by well-known spectral results for hyperbolic surfaces, we choose an appropriate path $\Sigma_t$ (which corresponds geometrically to pinching a noncontractible closed geodesic) to produce the desired effect on the Morse index that yields bifurcation. The only relevant information about the factor $\S^{m-2}$ is that the first eigenvalue of its Laplacian satisfies a certain bound in terms of scalar curvature, see \eqref{eq:basicinequality}. The same bifurcation result holds replacing $\S^{m-2}$ by any other closed manifold $N$ that satisfies \eqref{eq:basicinequality}, see Theorem~\ref{thm:curvesbifurcating}.
In principle, this provides a method to construct periodic solutions to the Singular Yamabe problem on $M\setminus\Lambda$, for any $M$ that admits a codimension $2$ submanifold $N$ satisfying \eqref{eq:basicinequality}, whose normal bundle is trivial and conformally equivalent to $M\setminus\Lambda$. For this reason, we prove all relevant bifurcation results on a general manifold $N\times\Sigma$, and only specialize to $N=\S^{m-2}$ in the last section of the paper. Determining other geometrically interesting occurrences of the above situation is an object of ongoing research by the authors.

The paper is organized as follows. In Section~\ref{sec:spec}, we recall some spectral properties of hyperbolic surfaces. The appropriate variational framework for finding metrics of constant scalar curvature is discussed in Section~\ref{sec:bif}, and the core bifurcation result (Theorem~\ref{thm:curvesbifurcating}) is proved. Finally, Section~\ref{sec:proofs} contains the final arguments necessary to prove the above Main Theorem.

\medskip
\noindent\textbf{Acknowledgements.} It is a pleasure to thank Rafe Mazzeo for introducing us to the Singular Yamabe Problem and suggesting it as a possible application of our bifurcation techniques. We would also like to thank Sugata Mondal for bringing reference \cite{wolpert} to our attention.

\section{Spectrum of hyperbolic surfaces}
\label{sec:spec}

Let $\Sigma$ be a closed oriented smooth surface of genus $\gen(\Sigma)\geq2$. Recall that, by the Uniformization Theorem, every conformal class of metrics on $\Sigma$ contains a unique representative of constant sectional curvature $-1$,  called a \emph{hyperbolic metric}. We denote by $\mathcal H(\Sigma)$ the space of smooth hyperbolic metrics on $\Sigma$, endowed with the Whitney $C^\infty$ topology.\footnote{Observe that, in this particular situation, the choice of Whitney $C^\infty$ topology still allows for the use of results usually available only for Banach manifolds. More precisely, denote by $\mathcal H_r(\Sigma)$ the Banach manifold of $C^r$ hyperbolic metrics on $\Sigma$, with the Whitney $C^r$ topology, and by $\mathrm{Diff}_r^+(\Sigma)$ the group of orientation preserving $C^r$ diffeomorphisms of $\Sigma$. The orbit space $\mathcal T(\Sigma)=\mathcal H_r(\Sigma)/\mathrm{Diff}_r^+(\Sigma)$, called the \emph{Teichm\"uller space} of $\Sigma$, is independent of $r$ and homeomorphic to $\R^{6\gen(\Sigma)-6}$, see \cite{Tro92}. Given any smooth hyperbolic metric $g_0\in\mathcal H_r(\Sigma)$, there exists a $(6\gen(\Sigma)-6)$-dimensional submanifold $\mathcal S$ of $\mathcal H_r(\Sigma)$, consisting of smooth hyperbolic metrics, such that the restriction to $\mathcal S$ of the quotient map $\pi\colon\mathcal H_r(\Sigma)\to\mathcal T(\Sigma)$ is a smooth diffeomorphism onto a neighborhood of $\pi(g_0)\in\mathcal T(\Sigma)$. Thus, results on Banach manifolds can also be applied to $\mathcal H(\Sigma)$, provided they are local and invariant under diffeomorphisms.}
The Riemannian manifold $(\Sigma,h)$, with $h\in\mathcal H(\Sigma)$, will be called a \emph{hyperbolic surface}, and we denote the spectrum of the Laplacian operator on real-valued functions on $(\Sigma,h)$ by
\begin{equation*}
\spec(\Sigma,h)=\big\{0<\lambda_1(\Sigma,h)\leq\lambda_2(\Sigma,h)\leq\dots\leq\lambda_k(\Sigma,h) \nearrow+\infty\big\},
\end{equation*}
where each eigenvalue is repeated according to its multiplicity. The goal of this section is to discuss some spectral properties of hyperbolic surfaces required for our applications; for general references see \cite{buser-book,chavel}.

\subsection{Small eigenvalues}
One of the cornerstones of the proof of our Main Theorem is the behavior of eigenvalues of the Laplacian of hyperbolic surfaces near $\frac14$.
To illustrate the peculiar nature of these eigenvalues, we recall that classical estimates imply that, for any $\gen(\Sigma)\geq2$, there exist hyperbolic metrics on $\Sigma$ whose first $2\gen(\Sigma)-3$ eigenvalues are arbitrarily close to zero. On the other hand, the long-standing conjecture that at most $2\gen(\Sigma)-2$ eigenvalues can be $\leq\frac14$ for any hyperbolic metric on $\Sigma$ was recently proved by Otal and Rosas~\cite{or}. A well-known fact related to the above is that arbitrarily many eigenvalues can lie in the interval $[\tfrac14,\frac14+\varepsilon]$, see  \cite[Thm~8.1.2, p.\ 211]{buser-book} or \cite[Thm 2, p.\ 251]{chavel}. More precisely, the following holds:

\begin{proposition}\label{prop:buser}
Let $\Sigma$ be a closed oriented surface of genus $\gen(\Sigma)\geq2$. For all $\varepsilon>0$ and $k\in\N$, there exists $h\in\mathcal H(\Sigma)$ such that $\lambda_k(\Sigma,h)<\tfrac14+\varepsilon$.
\end{proposition}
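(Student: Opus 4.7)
My plan is to prove Proposition~\ref{prop:buser} via the min-max characterization of Laplace eigenvalues, reproducing essentially Buser's classical argument~\cite[Thm~8.1.2]{buser-book}. The idea is to choose a hyperbolic metric $h$ on $\Sigma$ with a very long, thin collar around a short simple closed geodesic, then place $k$ disjoint ``wave packets'' inside that collar whose Rayleigh quotients $R(u)=\int|\nabla u|^2\,\dd V_h/\int u^2\,\dd V_h$ are all close to $\tfrac14$. The value $\tfrac14$ appears because it is the bottom of the $L^2$ spectrum of the Laplacian on the hyperbolic plane, and the long collar is geometrically close to a cusp, whose essential spectrum is $[\tfrac14,+\infty)$.

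Concretely, for each $\ell>0$ I use Fenchel-Nielsen coordinates on Teichm\"uller space to pick $h_\ell\in\mathcal H(\Sigma)$ containing a simple closed geodesic $\gamma$ of length $\ell$. The Collar Lemma gives an embedded tubular neighborhood $C_\ell\cong(-w_\ell,w_\ell)\times\R/\Z$ with Fermi metric $\dd r^2+\ell^2\cosh^2(r)\,\dd\theta^2$ and width $w_\ell=\mathrm{arcsinh}\bigl(1/\sinh(\ell/2)\bigr)\to+\infty$ as $\ell\to 0$. I partition $[\tfrac12 w_\ell,w_\ell]$ into $k$ consecutive subintervals $I_1,\dots,I_k$ of equal length $L_\ell=w_\ell/(2k)$, pick smooth bumps $\chi_i$ supported in $I_i$ with $\|\chi_i'\|_{L^2}^2/\|\chi_i\|_{L^2}^2\lesssim 1/L_\ell^2$, and define the $\theta$-independent test function $u_i(r,\theta):=\chi_i(r)\cosh(r)^{-1/2}$ on $C_\ell$, extended by zero to $\Sigma\setminus C_\ell$. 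The $u_i$ have pairwise disjoint supports and therefore span a $k$-dimensional subspace $V\subset H^1(\Sigma,h_\ell)$.

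A direct computation, using the volume form $\ell\cosh(r)\,\dd r\,\dd\theta$ together with the identity $u_i'(r)=\cosh(r)^{-1/2}\bigl[\chi_i'(r)-\tfrac12\tanh(r)\chi_i(r)\bigr]$, integration by parts to annihilate $\int_{I_i}\chi_i\chi_i'\,\dd r$, and the exponential bound $1-\tanh(r)=O(e^{-w_\ell})$ on $I_i$, shows that $R(u_i)=\tfrac14+\|\chi_i'\|_{L^2}^2/\|\chi_i\|_{L^2}^2+O(e^{-w_\ell})=\tfrac14+O(k^2/w_\ell^2)$. Since the $u_i$ have disjoint supports, the Rayleigh quotient of any nonzero element of $V$ is a convex combination of the $R(u_i)$ and therefore obeys the same bound; by the min-max principle, $\lambda_k(\Sigma,h_\ell)\leq\tfrac14+O(k^2/w_\ell^2)$, which is strictly less than $\tfrac14+\varepsilon$ once $\ell$ is taken sufficiently small. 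The main delicate point I anticipate is the Rayleigh-quotient computation: the ansatz $\chi_i\cosh(r)^{-1/2}$ is engineered precisely so that $\tfrac14$ appears as the leading term (mirroring the hyperbolic plane spectrum), and both the cross term and the weight factor must be absorbed into the exponentially small error controlled by the $\tanh$-estimate above. If desired, the computation can be simplified by replacing $\cosh(r)^{-1/2}$ with $\sqrt{2}\,e^{-r/2}$ on the shifted range $[\tfrac12 w_\ell,w_\ell]$, where the two functions agree up to $O(e^{-w_\ell})$, thereby reducing the Rayleigh-quotient estimate to elementary integrals of exponentials.
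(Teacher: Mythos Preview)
The paper does not prove Proposition~\ref{prop:buser} at all: it is quoted as a well-known result, with references to Buser~\cite[Thm~8.1.2, p.~211]{buser-book} and Chavel~\cite[Thm~2, p.~251]{chavel}. Your proposal supplies the classical collar argument behind those references, and the computation you outline is correct: the ansatz $u_i=\chi_i\cosh(r)^{-1/2}$ is exactly the one that converts the collar Laplacian into a one-dimensional Schr\"odinger problem with potential $\tfrac14\tanh^2(r)$, and on $[\tfrac12 w_\ell,w_\ell]$ both $1-\tanh^2(r)$ and the integrated cross term are $O(e^{-w_\ell})$, yielding $R(u_i)=\tfrac14+\|\chi_i'\|^2/\|\chi_i\|^2+O(e^{-w_\ell})$ as you claim.

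One small point to fix: with the paper's convention $\spec(\Sigma,h)=\{0<\lambda_1\leq\lambda_2\leq\dots\}$, the min-max characterization of $\lambda_k$ requires a $(k{+}1)$-dimensional test space (the constant function occupies the zeroth slot). Your $k$ disjointly supported functions span only a $k$-dimensional subspace, so as written they bound $\lambda_{k-1}$, not $\lambda_k$. The repair is trivial---partition $[\tfrac12 w_\ell,w_\ell]$ into $k{+}1$ subintervals rather than $k$---and the asymptotics are unchanged. Alternatively, you may adjoin the constant function to $V$, but then the Rayleigh quotient on $\mathrm{span}\{1,u_1,\dots,u_k\}$ is no longer a convex combination of the $R(u_i)$, so the first fix is cleaner.
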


The above implies that, up to deforming the hyperbolic metric $h\in\mathcal H(\Sigma)$,
\begin{equation*}
n_a(\Sigma,h):=\max \big\{k\in\N:\lambda_k(\Sigma,h)<a\big\}
\end{equation*}
can be made arbitrarily large for any given $a>\tfrac14$. More precisely, we have:

\begin{corollary}\label{cor:ma}
Let $(\Sigma,h_0)$ be a hyperbolic surface. For any $a>\tfrac14$ and any nonnegative integer $d$, there exists a real-analytic path of hyperbolic metrics $h_t\in \mathcal H(\Sigma)$, $t\in [0,1]$, such that $n_a(\Sigma,h_1)>n_a(\Sigma,h_0)+d$.
\end{corollary}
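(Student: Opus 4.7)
The plan is to use Proposition~\ref{prop:buser} to select a suitable terminal metric $h_1$ with many eigenvalues below $a$, and then to connect $h_0$ to $h_1$ by a real-analytic path using the real-analytic structure on Teichm\"uller space.

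First, fix $\varepsilon>0$ with $\tfrac14+\varepsilon<a$, and set $k:=n_a(\Sigma,h_0)+d+1$. Proposition~\ref{prop:buser} produces $h_1\in\mathcal H(\Sigma)$ with $\lambda_k(\Sigma,h_1)<\tfrac14+\varepsilon<a$. Because eigenvalues are listed in nondecreasing order, $\lambda_j(\Sigma,h_1)<a$ for all $j\le k$, so $n_a(\Sigma,h_1)\ge k>n_a(\Sigma,h_0)+d$, which is the desired endpoint inequality.

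It then remains to join $h_0$ to $h_1$ by a real-analytic path in $\mathcal H(\Sigma)$. By the footnote, $\mathcal T(\Sigma)\cong\R^{6\gen(\Sigma)-6}$ is a connected real-analytic manifold, so one can pick a real-analytic curve $\gamma\colon[0,1]\to\mathcal T(\Sigma)$ with $\gamma(0)=\pi(h_0)$ and $\gamma(1)=\pi(h_1)$; for instance, a line segment in Fenchel--Nielsen coordinates. Cover $\gamma([0,1])$ by finitely many open sets on each of which the local slice $\mathcal S$ from the footnote provides a real-analytic lift to $\mathcal H(\Sigma)$, and glue these lifts on overlaps by pulling back through a real-analytic family of orientation-preserving diffeomorphisms, producing a global real-analytic path $h_t$ starting at $h_0$. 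Since each $\lambda_j$, and hence $n_a$, is invariant under diffeomorphism pullback, the endpoint of the lift is some $\phi^*h_1$ with $n_a(\Sigma,\phi^*h_1)=n_a(\Sigma,h_1)$, and the conclusion follows after renaming $\phi^*h_1$ as $h_1$.

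The main obstacle is the global real-analytic lifting: the footnote only guarantees a real-analytic section of $\pi$ locally, and matching different local sections on overlaps requires that the principal $\mathrm{Diff}_r^+(\Sigma)$-bundle $\mathcal H_r(\Sigma)\to\mathcal T(\Sigma)$ admits real-analytic transition maps. One can sidestep this difficulty altogether by parameterizing the path explicitly via Fenchel--Nielsen coordinates, which yield a real-analytic family of pants-decomposition hyperbolic metrics realizing any prescribed real-analytic curve in $\mathcal T(\Sigma)$.
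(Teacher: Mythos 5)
Your proposal is correct and follows essentially the same approach as the paper's one-line proof: invoke Proposition~\ref{prop:buser} to obtain $h_1$ with $\lambda_{n_a(\Sigma,h_0)+d+1}(\Sigma,h_1)<a$, then join $h_0$ to $h_1$ by a real-analytic path in $\mathcal H(\Sigma)$. Where the paper simply asserts that $\mathcal H(\Sigma)$ is a real-analytic path-connected manifold, you unpack the path construction via Teichm\"uller theory and Fenchel--Nielsen coordinates; the only small point to add is that if the Fenchel--Nielsen section produces a path starting at $\phi_0^*h_0$ rather than at $h_0$ itself, one should pull the whole path back by the fixed diffeomorphism $\phi_0^{-1}$, which preserves both real-analyticity and the eigenvalue counts, so the path indeed begins at the prescribed $h_0$.
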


\begin{proof}
Let $K=\min \{k:\lambda_k(\Sigma,h_0)\geq a\}$. Since $a>\tfrac14$, by Proposition~\ref{prop:buser}, there exists $h_1\in\mathcal H(\Sigma)$ such that $\lambda_{K+d}(\Sigma,h_1)<a$. Thus $n_a(\Sigma,h_0)=K-1$ and $n_a(\Sigma,h_1)\geq K+d$. Since $\mathcal H(\Sigma)$ is a real-analytic path-connected\footnote{Since $\mathcal T(\Sigma)=\mathcal H(\Sigma)/\mathrm{Diff}^+(\Sigma)$ and $\mathrm{Diff}^+(\Sigma)$ are path-connected, so is $\mathcal H(\Sigma)$.} manifold, the conclusion follows.
\end{proof}

\subsection{Avoiding eigenvalues}
In our applications, a crucial step is to avoid a given real number $\lambda>\tfrac14$ in $\spec(\Sigma,h)$, up to perturbing the hyperbolic metric $h\in\mathcal H(\Sigma)$. In this context, it is natural to consider the effect of real-analytic deformations of hyperbolic metrics on the corresponding eigenvalues of the Laplacian.

Given a real-analytic path $h_\ell\in\mathcal H(\Sigma)$, the corresponding Laplacians $\Delta_{h_\ell}$ form a real-analytic path of symmetric unbounded discrete operators. By the Kato Selection Theorem~\cite{Kato95}, their eigenvalues are real-analytic functions of $\ell$, up to relabeling. More precisely, given $\lambda\in\spec(\Sigma,h_{\ell_0})$, there are real-analytic functions $\lambda(\ell)$, called \emph{eigenvalue branches} through $\lambda$, such that $\lambda(\ell_0)=\lambda$ and $\lambda(\ell)\in\spec(\Sigma,h_\ell)$. For $\ell$ near $\ell_0$, these are the only elements of $\spec(\Sigma,h_\ell)$ near $\lambda$. Notice that, since $\lambda(\ell)$ are real-analytic, they can only attain the value $\lambda$ countably many times.

In this framework, we now prove the desired avoidance principle.

\begin{proposition}\label{prop:lambdainspectrum}
Let $\Sigma$ be a closed oriented surface of genus $\gen(\Sigma)\geq2$, and fix $\lambda>\tfrac14$. Then the subset $\mathcal H_\lambda(\Sigma):=\big\{h\in\mathcal H(\Sigma):\lambda\not\in\spec(\Sigma,h)\big\}$ is open and dense.
\end{proposition}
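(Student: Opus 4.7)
For openness, if $\lambda\notin\spec(\Sigma,h_0)$ then discreteness of $\spec(\Sigma,h_0)$ supplies an open interval $(\lambda-\varepsilon,\lambda+\varepsilon)$ disjoint from it, and continuous dependence of individual Laplace eigenvalues on the metric (in Whitney $C^\infty$, via the min-max characterization) transports this disjointness to a whole neighborhood of $h_0$.

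For density, fix $h_0\in\mathcal H(\Sigma)$; the nontrivial case is $\lambda\in\spec(\Sigma,h_0)$ with multiplicity $p\geq 1$. The plan is to push $h_0$ along a real-analytic path $h_\ell\in\mathcal H(\Sigma)$, $\ell\in[0,1]$, supplied by Corollary~\ref{cor:ma} with $a=\lambda$ and $d$ large enough to ensure $n_\lambda(\Sigma,h_1)>n_\lambda(\Sigma,h_0)+p$. Along such a path, Kato's theorem yields real-analytic eigenvalue branches $\mu_j(\ell)$. If no branch passing through $\lambda$ at $\ell=0$ is identically $\lambda$, then each such $\mu_j-\lambda$ is a nonzero real-analytic function on $[0,1]$ vanishing on only a countable set; its complement is dense and contains $\ell$ arbitrarily close to $0$ with $\lambda\notin\spec(\Sigma,h_\ell)$, yielding metrics arbitrarily close to $h_0$ that lie in $\mathcal H_\lambda(\Sigma)$.

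The hard part is ruling out the bad scenario in which all $p$ branches through $\lambda$ at $\ell=0$ are identically $\lambda$. One route is combinatorial: the jump $n_\lambda(\Sigma,h_1)>n_\lambda(\Sigma,h_0)+p$ forces, by a Kato-branch count and the intermediate value theorem, at least one branch initially above $\lambda$ to cross $\lambda$ at some interior $\ell^*\in(0,1)$, contributing multiplicity on top of the $p$ supposedly constant branches; iterating with larger $d$ eventually violates multiplicity bounds available for Laplace eigenvalues of hyperbolic surfaces of fixed genus. A more direct analytic route is to invoke first-order perturbation theory: if all $p$ branches were locally constant, then the symmetric bilinear form
\begin{equation*}
B_\eta(\phi,\psi)=\left\langle \phi,\,\tfrac{d}{d\ell}\Big|_{\ell=0}\Delta_{h_\ell}\,\psi\right\rangle_{L^2(\Sigma,h_0)}
\end{equation*}
on the eigenspace $E_\lambda\subset L^2(\Sigma,h_0)$ would vanish for every admissible tangent direction $\eta\in T_{h_0}\mathcal H(\Sigma)$; the standard variational formula for $\Delta_{h_\ell}$ makes this impossible, because eigenfunctions in $E_\lambda$ are nonconstant (as $\lambda>0$) and the tangent space to Teichm\"uller space is $(6\gen(\Sigma)-6)$-dimensional, hence rich enough to detect the nontrivial symmetric $2$-tensor built from $\phi,\psi$ and their derivatives. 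Either route hinges on this perturbative nondegeneracy, which is the subtlest ingredient in the argument.
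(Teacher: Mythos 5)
Your openness argument matches the paper's. For density you correctly isolate the crux — showing that, along some real-analytic deformation of $h_0$, the eigenvalue branches through $\lambda$ are not constant — but you do not actually close this gap, and both routes you sketch have problems.

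First, a small but real slip: to succeed, you must rule out that \emph{any} branch through $\lambda$ is identically $\lambda$, not merely that \emph{all} of them are. A single constant branch already keeps $\lambda$ in $\spec(\Sigma,h_\ell)$ for every $\ell$, no matter what the other branches do, so the argument cannot proceed even if only one branch is constant.

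More seriously, neither of your two proposed mechanisms works as stated. The combinatorial route would need a multiplicity bound for Laplace eigenvalues of hyperbolic surfaces that is \emph{uniform over the whole spectrum}; no such bound is available (the known bounds, e.g.\ Cheng/S\'evennec type, degrade with $k$ or apply only to low eigenvalues), and in any case the putative constant branches and the crossing branches need not pile up at the same $\ell$, so no single metric is forced to carry high multiplicity. The perturbative route claims that the vanishing of the first-variation form $B_\eta$ on $E_\lambda$ for all Teichm\"uller directions $\eta$ is ``clearly impossible''; this is not clear. That assertion is essentially a statement about the absence of first-order isospectral deformations of a given eigenvalue, which is a delicate question and not settled by dimension counting on $T_{h_0}\mathcal H(\Sigma)$. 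Moreover, local constancy of the branches only gives you $B_\eta=0$ for the single direction $\eta=\dot h_0$ of your chosen path, not for all of $T_{h_0}\mathcal H(\Sigma)$.

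The paper avoids all of this by choosing the deformation very carefully. Rather than the path from Corollary~\ref{cor:ma}, it takes a real-analytic \emph{pinching} family $h_\ell$ through $h$, with systole $\ell\searrow 0$ (existence by Wolpert). Along such a degenerating family, Wolpert's Theorem~5.14 in \cite{wolpert} asserts precisely that any real-analytic eigenvalue branch whose limit as $\ell\searrow 0$ is $>\tfrac14$ must be nonconstant. A constant branch $\lambda(\ell)\equiv\lambda>\tfrac14$ would violate this, which is the contradiction. Note that this is exactly where the hypothesis $\lambda>\tfrac14$ enters; your plan does not use it, which is a sign that it cannot be complete. The deep input you are missing is this specific spectral-degeneration theorem of Wolpert, together with the fact that the deformation must be a pinching family for the theorem to apply.
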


\begin{proof}
The condition $\lambda\not\in\spec(\Sigma,g)$ is open in the space $\Met(\Sigma)$ of \emph{all} smooth Riemannian metrics $g$ on $\Sigma$, hence also in $\mathcal H(\Sigma)$. In order to prove density of this condition, suppose $\lambda\in\spec(\Sigma,h)$, and let $h_\ell\in\mathcal H(\Sigma)$ be a real-analytic path of \emph{pinching} hyperbolic metrics through $h$. In other words, $h=h_{\ell_0}$ for some $\ell_0>0$, and $(\Sigma,h_\ell)$ have shortest closed geodesics of length $\ell$, that are pinched in the limit $\ell\searrow0$. The existence of such paths is proved by Wolpert~\cite[\textsection 2.5]{wolpertII}.
Denote by $\lambda(\ell)$ the corresponding eigenvalue branches through $\lambda$, which are real-analytic functions of $\ell$, as described above. If none of the $\lambda(\ell)$ are constant functions, then we are done, as $\lambda\not\in\spec(\Sigma,h_\ell)$ for any $\ell\neq\ell_0$ near $\ell_0$. Otherwise, there exists a constant eigenvalue branch $\lambda(\ell)\equiv\lambda$; in particular, $\lim_{\ell\searrow0}\lambda(\ell)=\lambda>\tfrac14$. According to a deep result of Wolpert~\cite[Thm 5.14]{wolpert}, the only eigenvalue branches for degenerating hyperbolic surfaces as above whose limit is $>\tfrac14$ are \emph{nonconstant}. This contradiction implies that this latter case cannot happen.
\end{proof}

\begin{remark}
Given $\lambda>\tfrac14$, it is a hard problem to find an \emph{explicit} hyperbolic surface $(\Sigma,h)$ with $\lambda\not\in\spec(\Sigma,h)$. For small $\lambda$, this is related to finding hyperbolic surfaces with large first eigenvalue. Recall that $\lambda_1(\Sigma,h)\leq 2(\gen(\Sigma)+1)/(\gen(\Sigma)-1)$, by an estimate of Yang and Yau~\cite{yy}. In particular, the larger $\gen(\Sigma)$ is, the smaller the upper bound on $\lambda_1(\Sigma,h)$. A well-studied hyperbolic surface of genus $2$ is the \emph{Bolza surface} $(\Sigma_{B},h_B)$, which has the largest systole (shortest noncontractible closed geodesic) and the largest conformal group among such surfaces. Numeric estimates yield $\lambda_1(\Sigma_B,h_B)\cong3.838$, see \cite[\textsection 5.3]{StrUsk2013}; and hence provide an explicit example with $\lambda\not\in\spec(\Sigma_B,h_B)$ for any $\lambda<3.8$.
\end{remark}

\section{Bifurcations from constant scalar curvature product metrics}
\label{sec:bif}

Let $(N,g_N)$ be a compact Riemannian manifold with $\dim N=n$ and constant scalar curvature $\scal_{N}\in\R$, and let $(\Sigma,h)$ be a hyperbolic surface. Denote by $g=g_N\oplus h$ the product metric on $N\times\Sigma$. The product manifold $(N\times\Sigma,g)$ has
\begin{equation}\label{eq:scalvol-product}
\begin{aligned}
\scal_{g}&=\scal_{N}-2,  \\
\Vol(N\times\Sigma,g)&=4\pi(\gen(\Sigma)-1)\Vol(N,g_N).
\end{aligned}
\end{equation}
In this section, we discuss the variational approach to the problem of finding constant scalar curvature metrics on $N\times\Sigma$, and establish our core results on bifurcation of solutions issuing from families of product metrics.

\subsection{Variational setup}
Consider the Sobolev space $H^1(N\times\Sigma)$ and the Lebesgue space $L^p(N\times\Sigma,\mathrm{vol}_g)$, where $\mathrm{vol}_g$ is the volume density of the product metric $g$. By the Gagliardo-Nirenberg-Sobolev inequality, there is a continuous inclusion $H^1(N\times\Sigma)\hookrightarrow L^{\frac{2(n+2)}{n}}(N\times\Sigma,
\mathrm{vol}_g)$, and the subset
\begin{equation*}
[g]_{\v}:=\left\{\begin{array}{r}\phi\in H^1(N\times\Sigma) :
\int_{N\times\Sigma}\phi^{\frac{2(n+2)}n}\,\mathrm{vol}_g=\Vol(N\times\Sigma,g)\\[0.2cm]
\text{and } \phi>0\ \text{a.e.}
\end{array}
\right\}
\end{equation*}
is a smooth Hilbert submanifold of $H^1(N\times\Sigma)$.
The map $[g]_\v\ni\phi\mapsto g_\phi=\phi^\frac4n\, g$ gives an identification between $[g]_\v$ and the set
of Sobolev $H^1$ metrics in the conformal class of $g$ that have the same volume as $g$. The constant map $1\in [g]_{\v}$ clearly corresponds to the original metric $g$, and the tangent space to $[g]_{\v}$ at this point is
\begin{equation}\label{eq:tangentspace}
T_{1}[g]_{\v}=\left\{\psi\in H^1(N\times\Sigma,g):\int_{N\times\Sigma}\psi\,\vol_g=0\right\}.
\end{equation}

It is well-known that $\phi\in [g]_\v$ is a critical point of the \emph{Hilbert-Einstein functional}
$\mathcal A\colon [g]_{\v}\to\R$, defined by
\begin{equation}\label{eq:defHE}
\mathcal A(\phi):=\int_{N\times\Sigma}\Big(4\tfrac{n+1}{n}\,|\nabla\phi|_g^2+(\scal_{N}-2) \,\phi^2\Big)\,\vol_g,
\end{equation}
if and only if $\phi\in C^\infty(N\times\Sigma)$ and $g_\phi=\phi^\frac{4}{n}\, g$ has constant scalar curvature, see e.g.\ \cite{lp,schoen89,trudinger68}. In particular, since the product metric $g$ has constant scalar curvature, the constant function $1\in [g]_\v$ is a critical point of $\mathcal A$. The second variation of $\mathcal A$ at this critical point is the bilinear symmetric form on $T_1[g]_\v$ given by
\begin{equation*}
\dd^2\!\mathcal A(1)(\psi_1,\psi_2)=\frac{n(n+1)}{2}\int_{N\times\Sigma} \Big(g(\nabla\psi_1,\nabla\psi_2)-\frac{\scal_{N}-2}{n+1}\psi_1\psi_2\Big)\!\vol_g.
\end{equation*}
Using the compactness of $H^1(N\times\Sigma)\hookrightarrow L^2(N\times\Sigma)$, we have the existence of a self-adjoint operator $F_g\colon T_1[g]_\v\to T_1[g]_\v$, given by a compact perturbation of the identity, such that
\begin{equation}\label{eq:d2a-strong}
\dd^2\!\mathcal A(1)(\psi_1,\psi_2)=\tfrac{n(n+1)}{2}\big\langle F_g\psi_1,\psi_2\big\rangle_{H^1}, \quad\mbox{ for all } \psi_1,\psi_2\in T_1[g]_\v.
\end{equation}
In particular, $F_g$ is an essentially positive Fredholm operator of index zero. The dimension of $\ker F_g$ and the number (counted with multiplicity) of negative eigenvalues of $F_g$ are, respectively, the \emph{nullity} and the \emph{Morse index} of $1$ as a critical point of $\mathcal A$ in $[g]_\v$. As customary, the second variation of $\mathcal A$ is better understood using an $L^2$-pairing, rather than an $H^1$-pairing, in the space of functions on $N\times\Sigma$ with zero average. Replacing \eqref{eq:tangentspace} with
\begin{equation*}
L^2_0(N\times\Sigma,g):=\left\{\psi\in L^2(N\times\Sigma,\mathrm{vol}_g):\int_{N\times\Sigma}\psi\,\vol_g=0\right\},
\end{equation*}
we can describe $\dd^2\!\mathcal A(1)$ in terms of an unbounded symmetric Fredholm operator $J_g\colon L^2_0(N\times\Sigma,g)\to L^2_0(N\times\Sigma,g)$, by means of
\begin{equation*}\label{eq:d2a-weak}
\dd^2\!\mathcal A(1)(\psi_1,\psi_2)=\tfrac{n(n+1)}{2}\big\langle J_g \psi_1,\psi_2\big\rangle_{L^2}, \quad\mbox{ for all } \psi_1,\psi_2\in  L^2_0(N\times\Sigma,g).
\end{equation*}
The operator $J_g$, called the \emph{Jacobi operator}, is a self-adjoint elliptic operator that can be explicitly computed as
\begin{equation*}
J_g=\Delta_g-\frac{\scal_{N}-2}{n+1}.
\end{equation*}
The kernel and the number of negative eigenvalues of $F_g$ and $J_g$ coincide, so the nullity and Morse index of critical points of the Hilbert-Einstein function can be computed using the spectrum of $J_g$. The latter is given by the (positive) spectrum of the Laplacian $\Delta_g$, shifted to the left by $\frac{\scal_{N}-2}{n+1}$, i.e., the eigenvalues of $J_g$ are:
\begin{equation*}
\lambda_1(N\times\Sigma,g)-\tfrac{\scal_{N}-2}{n+1}\leq
\dots\leq\lambda_k(N\times\Sigma,g)-\tfrac{\scal_{N}-2}{n+1}\leq\dots\nearrow+\infty,
\end{equation*}
where the above are repeated according to multiplicity. This proves the following:

\begin{lemma}\label{lemma:morseidxnull}
The \emph{Morse index} and \emph{nullity} of $1\in[g]_\v$ as a critical point of $\mathcal A\colon [g]_\v\to\R$ are, respectively,
\begin{equation*}
i(g)=\max\left\{k:\lambda_k(N\times\Sigma,g)<\tfrac{\scal_{N}-2}{n+1}\right\} \quad \mbox{and}\quad\nu(g)=\dim\ker J_g.
\end{equation*}
\end{lemma}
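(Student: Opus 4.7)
The plan is essentially to assemble the spectral bookkeeping that has already been set up in the preceding paragraphs; there is no real obstacle here, since the content of the lemma is a direct translation of the formula for $\dd^2\!\mathcal A(1)$ into the language of eigenvalues of $\Delta_g$. I would organize the write-up in three short steps.

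First, I would invoke the standard fact from critical-point theory that, for a smooth functional on a Hilbert manifold whose Hessian at a critical point is represented (via the metric) by a self-adjoint Fredholm operator of the form ``compact perturbation of the identity,'' the Morse index is the number of strictly negative eigenvalues (with multiplicity) and the nullity is the dimension of the kernel. By \eqref{eq:d2a-strong}, the Hessian of $\mathcal A$ at $1\in[g]_\v$ is represented (up to the positive constant $\tfrac{n(n+1)}{2}$) by exactly such an operator $F_g$ on $T_1[g]_\v$. Hence $i(g)$ and $\nu(g)$ are encoded in the spectrum of $F_g$, and since $F_g$ and $J_g$ have the same kernel and the same number of negative eigenvalues (this was already stated just after the definition of $J_g$), we may equivalently use $J_g$.

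Second, I would identify the spectrum of $J_g$ on $L^2_0(N\times\Sigma,g)$. Since
\begin{equation*}
J_g=\Delta_g-\tfrac{\scal_N-2}{n+1},
\end{equation*}
eigenfunctions of $J_g$ are exactly eigenfunctions of $\Delta_g$, with eigenvalues shifted by $-\tfrac{\scal_N-2}{n+1}$. The constraint of living in $L^2_0$ removes precisely the $0$-eigenspace of $\Delta_g$, which is spanned by constants; the remaining eigenvalues are $\lambda_k(N\times\Sigma,g)$ for $k\geq 1$, with the same multiplicities. Therefore the spectrum of $J_g$ on $L^2_0(N\times\Sigma,g)$ consists exactly of the numbers $\lambda_k(N\times\Sigma,g)-\tfrac{\scal_N-2}{n+1}$ for $k\geq 1$, listed with multiplicity, as already recorded in the paragraph preceding the lemma.

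Third, I would read off the two quantities. A $J_g$-eigenvalue is strictly negative precisely when $\lambda_k(N\times\Sigma,g)<\tfrac{\scal_N-2}{n+1}$; since the $\lambda_k$ are nondecreasing and tend to $+\infty$, the count of such $k$ is exactly $\max\{k:\lambda_k(N\times\Sigma,g)<\tfrac{\scal_N-2}{n+1}\}$, which gives the stated formula for $i(g)$. Similarly, $\ker J_g$ is by definition the sum of eigenspaces with eigenvalue $0$, so $\nu(g)=\dim\ker J_g$. The only mild subtlety worth flagging is the restriction to zero-mean functions, which is automatic since nonconstant Laplace eigenfunctions are $L^2$-orthogonal to constants; once this is noted, the lemma follows.
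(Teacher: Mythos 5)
Your proposal is correct and follows essentially the same route as the paper, which obtains the lemma directly from the representation of $\dd^2\!\mathcal A(1)$ by the essentially positive Fredholm operator $F_g$, the coincidence of kernels and negative eigenvalue counts of $F_g$ and $J_g$, and the identification of the spectrum of $J_g$ on $L^2_0(N\times\Sigma,g)$ with the shifted positive Laplace spectrum. Your extra remark on zero-mean functions matches the paper's observation that the only constant in $L^2_0(N\times\Sigma,g)$ is zero.
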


\begin{remark}
Note that $-\tfrac{\scal_{N}-2}{n+1}$ is \emph{not} in the spectrum of $J_g$, since the only constant function on $L^2_0(N\times\Sigma,g)$ is identically zero. We also recall the well-known fact that eigenfunctions of $J_g$ are smooth and form an orthonormal basis of $L^2_0(N\times\Sigma,g)$.
\end{remark}

\subsection{Bifurcation}\label{subsec:bif}
Instead of having a fixed hyperbolic metric on $\Sigma$, let $h_t\in\mathcal H(\Sigma)$, $t\in [a,b]$, be a \emph{path} of hyperbolic metrics. Then, we have a corresponding path
\begin{equation}\label{eq:gt}
g_t=g_N\oplus h_t \in\Met(N\times\Sigma), \quad t\in [a,b],
\end{equation}
of product metrics on $N\times\Sigma$ satisfying \eqref{eq:scalvol-product}. We say that $t_*\in [a,b]$ is a \emph{bifurcation instant} for $g_t$ if there exist sequences $\{t_q\}_{q\in\mathds N}$ in $[a,b]$ converging to $t_*$ and
$\{g_q\}_{q\in\mathds N}$ in $\Met(N\times\Sigma)$ converging to $g_{t_*}$, called a \emph{bifurcating branch}, such that:
\begin{itemize}
\item[(i)] each $g_q$ has constant scalar curvature;
\item[(ii)] $g_q$ is conformal to $g_{t_q}$, but $g_q\neq g_{t_q}$;
\item[(iii)] $\Vol(N\times\Sigma,g_q)=4\pi(\gen(\Sigma)-1)\Vol(N,g_N)$.
\end{itemize}
In other words, $t_*$ is a bifurcation instant if local uniqueness of $g_t$ as a solution to \eqref{eq:scalvol-product} fails around $g_{t_*}$. That is, for any open neighborhood of $g_{t_*}\in\Met(N\times\Sigma)$, there are \emph{other} constant scalar curvature metrics (with normalized volume) in the conformal class of some $g_t$, with $t$ near~$t_*$.

We now establish the key result used in our applications, namely, the existence of paths of product metrics on $N\times\Sigma$ along which the Morse index has arbitrarily large variation.

\begin{proposition}\label{prop:morseidx}
Assume that the following inequalities hold:
\begin{equation}\label{eq:basicinequality}
\frac14<\frac{\scal_{N}-2}{n+1}<\lambda_1(N).
\end{equation}
Then, for any fixed $(\Sigma,h_0)$ and any nonnegative integer $d$, there exists a real-analytic path $h_t\in\mathcal H(\Sigma)$ such that, setting $g_t=g_N\oplus h_t$, we have $i(g_1)>i(g_0)+d$.
\end{proposition}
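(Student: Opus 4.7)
The plan is to reduce the computation of the Morse index $i(g_t)$ to counting low-lying eigenvalues on the hyperbolic factor, and then invoke Corollary~\ref{cor:ma}. Set $a := \frac{\scal_N - 2}{n+1}$, so that hypothesis \eqref{eq:basicinequality} reads $\tfrac14 < a < \lambda_1(N)$.

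First I would exploit the product structure of the Laplacian. For $g_t = g_N \oplus h_t$, the spectrum of $\Delta_{g_t}$ on $N \times \Sigma$ is given by all sums $\mu + \nu$, where $\mu$ ranges over $\spec(N, g_N) \cup \{0\}$ and $\nu$ over $\spec(\Sigma, h_t) \cup \{0\}$, with the corresponding eigenfunctions obtained by taking products of eigenfunctions on each factor. Since the Jacobi operator $J_{g_t}$ acts on the mean-zero subspace $L^2_0(N \times \Sigma, g_t)$, the pair $(\mu,\nu) = (0,0)$, corresponding to constant functions, is excluded. By Lemma~\ref{lemma:morseidxnull}, $i(g_t)$ then equals the number of remaining pairs with $\mu + \nu < a$, counted with multiplicity. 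The crucial observation is that $a < \lambda_1(N)$ forces $\mu = 0$: indeed, if $\mu > 0$, then $\mu \geq \lambda_1(N) > a$, contradicting $\mu + \nu < a$ with $\nu \geq 0$. The admissible pairs are therefore exactly $(0,\nu)$ with $\nu \in \spec(\Sigma, h_t)$ and $\nu < a$, so
\begin{equation*}
i(g_t) = n_a(\Sigma, h_t).
\end{equation*}

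Since $a > \tfrac14$, Corollary~\ref{cor:ma} produces a real-analytic path $h_t \in \mathcal H(\Sigma)$, $t \in [0,1]$, starting at the prescribed $h_0$ and satisfying $n_a(\Sigma, h_1) > n_a(\Sigma, h_0) + d$. The induced path $g_t = g_N \oplus h_t$ then satisfies $i(g_1) > i(g_0) + d$, as required.

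There is no genuine obstacle within this proposition once one recognizes that the double inequality \eqref{eq:basicinequality} decouples the two factors: the upper bound $a < \lambda_1(N)$ rules out any contribution from $(N, g_N)$ to the negative part of the spectrum of $J_{g_t}$, while the lower bound $a > \tfrac14$ is precisely what allows Corollary~\ref{cor:ma} to supply arbitrarily many new eigenvalues of $(\Sigma, h_t)$ below $a$. The real technical content of the argument has already been packaged into Section~\ref{sec:spec}, most notably in the pinching deformations and Wolpert's theorem used to prove Proposition~\ref{prop:lambdainspectrum} (and implicitly Proposition~\ref{prop:buser}).
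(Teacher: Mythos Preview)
Your proof is correct and follows essentially the same route as the paper: you set $a=\tfrac{\scal_N-2}{n+1}$, use the product decomposition of $\spec(N\times\Sigma,g_t)$ together with $a<\lambda_1(N)$ to reduce $i(g_t)$ to $n_a(\Sigma,h_t)$, and then apply Corollary~\ref{cor:ma}. The only cosmetic difference is that the paper states the identity $i(g_0)=n_a(\Sigma,h_0)$ only at $t=0$ and uses the trivial inequality $i(g_1)\ge n_a(\Sigma,h_1)$ at $t=1$, whereas you (equivalently) prove the equality for all $t$; your closing remark about Proposition~\ref{prop:lambdainspectrum} is extraneous here, since that result is used only later in Theorem~\ref{thm:curvesbifurcating}.
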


\begin{proof}
Let $a\in\left[\frac{\scal_{N}-2}{n+1},\lambda_1(N)\right]$. From Corollary~\ref{cor:ma}, there exists a real-analytic path $g_t=g_N\oplus h_t$, such that $n_a(\Sigma,h_1)>n_a(\Sigma,h_0)+d$. Note that $i(g_1)\geq n_a(\Sigma,h_1)$. Since $\lambda_1(N)\geq a$, the only eigenvalues of the Laplacian of $(N\times\Sigma,g_0)$ that are strictly smaller than $a$ are of the form $\lambda_0(N)+\lambda_k(\Sigma,h_0)\in\spec(\Sigma,h_0)$, with $\lambda_k(\Sigma,h_0)<a$. In particular, $i(g_0)=n_a(\Sigma,h_0)$. Altogether, we have $i(g_1)\geq n_a(\Sigma,h_1)>n_a(\Sigma,h_0)+d=i(g_0)+d$.
\end{proof}

\begin{theorem}\label{thm:curvesbifurcating}
Let $\Sigma$ be a hyperbolic surface and $N$ be a closed Riemannian manifold such that  \eqref{eq:basicinequality} holds. For any $h_0\in\mathcal H(\Sigma)$, there exist $h'_0,h'_1\in\mathcal H(\Sigma)$ with $h'_0$ arbitrarily close to $h_0$, such that the following holds: for any continuous path $h'_t\in\mathcal H(\Sigma)$ joining $h'_0$ to $h'_1$, there exists at least one bifurcation instant $t_*\in[0,1]$ for the path of constant scalar curvature metrics $g_t=g_N\oplus h'_t$ on $N\times\Sigma$.
\end{theorem}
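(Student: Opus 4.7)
Set $\lambda := \tfrac{\scal_N-2}{n+1}$, which by hypothesis belongs to the open interval $\bigl(\tfrac14, \lambda_1(N)\bigr)$. The product-spectrum formula together with $\lambda < \lambda_1(N)$ imply that the only eigenvalues of the Laplacian $\Delta_g$ on $N\times\Sigma$ that can be $\leq \lambda$ come from the $\Sigma$-factor, so by Lemma~\ref{lemma:morseidxnull},
\[
i(g_t) = n_\lambda(\Sigma,h_t) \qquad \text{and} \qquad \nu(g_t) > 0 \Longleftrightarrow \lambda \in \spec(\Sigma,h_t).
\]
The plan is to choose $h'_0, h'_1 \in \mathcal H(\Sigma)$ so that both $g'_0=g_N\oplus h'_0$ and $g'_1=g_N\oplus h'_1$ are \emph{nondegenerate} critical points of $\mathcal A$ (i.e., $\nu(g'_j)=0$) but with $i(g'_0) \neq i(g'_1)$, and then invoke the classical variational bifurcation criterion, which asserts that along any continuous family of critical points whose endpoints are nondegenerate and whose Morse indices disagree, at least one intermediate parameter must be a bifurcation instant.

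\textbf{Construction of the endpoints.} First, by Proposition~\ref{prop:lambdainspectrum}, $\mathcal H_\lambda(\Sigma)$ is open and dense in $\mathcal H(\Sigma)$, so pick $h'_0 \in \mathcal H_\lambda(\Sigma)$ arbitrarily close to $h_0$; this automatically gives $\nu(g'_0)=0$. Next, apply Proposition~\ref{prop:morseidx} with starting metric $h'_0$ (taking, say, $d=0$) to obtain a real-analytic path $\tilde h_s \in \mathcal H(\Sigma)$ with $\tilde h_0=h'_0$ and $i(\tilde g_1)>i(g'_0)$. A further application of Proposition~\ref{prop:lambdainspectrum} perturbs $\tilde h_1$ into some $h'_1 \in \mathcal H_\lambda(\Sigma)$. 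Because the eigenvalues of the Laplacian of a hyperbolic surface depend continuously on the metric, any eigenvalue strictly less than $\lambda$ at $\tilde h_1$ remains strictly less than $\lambda$ after a sufficiently small perturbation; thus $n_\lambda(\cdot)$ is lower semicontinuous, and the perturbation can be taken small enough that $i(g'_1) \geq i(\tilde g_1) > i(g'_0)$, while also $\nu(g'_1)=0$.

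\textbf{Bifurcation.} Given any continuous path $h'_t$, $t\in[0,1]$, joining $h'_0$ and $h'_1$ in $\mathcal H(\Sigma)$, set $g'_t=g_N\oplus h'_t$. Continuity of the Laplacian spectrum in the metric implies that $t \mapsto i(g'_t)$ is locally constant on the open subset $U=\{t : h'_t \in \mathcal H_\lambda(\Sigma)\}\subset[0,1]$. Since $i(g'_0) \neq i(g'_1)$, there must exist $t_* \in (0,1)\setminus U$, at which $\lambda \in \spec(\Sigma,h'_{t_*})$ and across which the Morse index of $g'_t$ strictly changes. Through the variational framework of Section~\ref{sec:bif}, constant-scalar-curvature, volume-normalized representatives in $[g'_t]_\v$ correspond to zeros of the $L^2$-gradient of $\mathcal A$, which is a Fredholm vector field; the jump of Morse index across a degenerate critical point is precisely the hypothesis of the classical Morse-theoretic bifurcation criterion for such fields, and it yields a bifurcating sequence $g_q \in [g'_{t_q}]_\v$ of constant-scalar-curvature metrics with $g_q \neq g'_{t_q}$ and $g_q \to g'_{t_*}$.

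\textbf{Main obstacle.} The principal technical subtlety lies in the last paragraph: the conformal slice $[g'_t]_\v$ itself depends on $t$, whereas the standard bifurcation criterion is phrased on a single fixed Banach manifold. This is handled by trivializing the family $\{[g'_t]_\v\}_{t\in[0,1]}$ as a single Hilbert submanifold of positive conformal factors inside $H^1(N\times\Sigma)$, using that the volume densities $\vol_{g'_t}$ depend continuously on $t$. Once this identification is in place, the problem reduces to a continuous $t$-dependent family of Fredholm gradients on a fixed Hilbert manifold, to which the Morse-theoretic bifurcation criterion applies verbatim.
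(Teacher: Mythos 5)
Your proposal is correct and follows essentially the same route as the paper: you produce nondegenerate endpoints with distinct Morse indices by combining Proposition~\ref{prop:morseidx} with the density statement of Proposition~\ref{prop:lambdainspectrum} (using \eqref{eq:basicinequality} to reduce index and nullity to the spectrum of the surface factor), and then invoke the variational bifurcation criterion in the variable-domain setting, with the local Palais--Smale condition coming from Fredholmness of the gradient, exactly as in the paper. The only cosmetic differences are the order in which the endpoint perturbations are performed and your attempt to pinpoint a specific degenerate instant $t_*$ ``across which the index jumps,'' which is both unnecessary (the criterion needs only nondegeneracy and unequal indices at the endpoints) and not fully justified as stated, since degenerate instants need not be isolated.
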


\begin{proof}
In order to prove the above result, we apply a standard variational bifurcation criterion, adapted to a variable domain framework as in \cite[Appendix~A]{lpz}. This criterion states that, under a local Palais-Smale condition, if $g_0$ and $g_1$ are nondegenerate critical points of different Morse index, then any path of critical points joining $g_0$ to $g_1$ has at least one bifurcation instant, see \cite{bp1,lpz,SmoWas} for details. In what follows, we describe how to verify each of these conditions in the above context of constant scalar curvature product metrics on $N\times \Sigma$.

First, the local Palais-Smale condition is an easy consequence of Fredholmness. Any path of hyperbolic metrics on $\Sigma$ induces a path \eqref{eq:gt} of product metrics on $N\times\Sigma$ and a path $\mathcal A_t\colon [g_t]_\v\to\R$ of smooth functionals given by \eqref{eq:defHE}. We can assume that the constant function $1_t\in [g_t]_\v$ is an isolated critical point for $\mathcal A_t$ for all $t$, otherwise bifurcation trivially holds. Since $\mathrm d^2\!\mathcal A_t(1_t)$ is represented by a Fredholm operator, the first derivative $\dd\mathcal A$ is a \emph{nonlinear Fredholm map} near $1_t$. This implies that each $\mathcal A_t$ satisfies a local Palais-Smale condition\footnote{This follows from a classical argument (see \cite{MarProd}) that uses the local representation for $C^1$ maps having Fredholm derivative as given in \cite[Thm~1.7, p.\ 4]{AbrRob}.} around $1_t$.
More precisely, for any fixed $t_*$, there are $\delta>0$ and a neighborhood $\mathcal U$ of $1_{t_*}$ in $H^1(N\times\Sigma)$ such that $\mathcal A_t$ satisfies the Palais-Smale condition on $\mathcal U\cap[g_t]_\v$ for all $t\in\left[t_*-\delta,t_*+\delta\right]$.

Second, let us describe how to verify nondegeneracy of endpoints, up to small perturbations. For any chosen $h_0\in\mathcal H(\Sigma)$ and nonnegative integer $d$, let $h_t$ be the path given by Proposition~\ref{prop:morseidx}. In particular, this defines $h_1\in\mathcal H(\Sigma)$. Then, Proposition~\ref{prop:lambdainspectrum}, combined with \eqref{eq:basicinequality}, ensures that there exist $h'_0$ and $h'_1$, arbitrarily close to $h_0$ and $h_1$, such that $\frac{\scal_{N}-2}{n+1}\not\in\spec(\Sigma,h'_i)$, $i=0,1$. Consider the metrics
\begin{equation}\label{eq:endpointsgi}
g_i:=g_N\oplus h'_i, \quad i=0,1.
\end{equation}
From \eqref{eq:basicinequality}, the only eigenvalues of the Laplacian of $g_i$ that are $\leq\frac{\scal_{N}-2}{n+1}$ are of the form $\lambda_0(N)+\lambda_k(\Sigma, h'_i)\in\spec(\Sigma,h'_i)$, hence $\frac{\scal_{N}-2}{n+1}\not\in\spec(N\times\Sigma,g_i)$, $i=0,1$. By Lemma~\ref{lemma:morseidxnull}, we have that $\nu(g_i)=0$, i.e., $\ker\dd^2\!\mathcal A_t(g_i)$ is trivial, so \eqref{eq:endpointsgi} are nondegenerate.

Finally, we verify that the Morse index of any path $g_t=g_N\oplus h'_t$ joining $g_0$ to $g_1$ is nonconstant. Again, by \eqref{eq:basicinequality} and Lemma~\ref{lemma:morseidxnull}, the Morse index $i(g_t)$ of $1_t$ as a critical point of $\mathcal A_t$ is given by the number (with multiplicity) of eigenvalues of the Laplacian of $h'_t$ that are strictly smaller than $\frac{\scal_{N}-2}{n+1}$. By Proposition~\ref{prop:morseidx}, and continuity of the spectrum, we have that $i(g_1)>i(g_0)+d$, provided that $h'_0$ and $h'_1$ are chosen sufficiently close to $h_0$ and $h_1$.
\end{proof}

\begin{remark}\label{rem:convergence}
It is not difficult to show that the sequence of metrics that bifurcate from $g_{t_*}$ in Theorem~\ref{thm:curvesbifurcating} converges to $g_{t_*}$ in the $C^r$-topology, for any $r\ge2$.
More generally, suppose $g_q$ is a sequence of smooth metrics on a compact manifold $M$ and $u_q\colon M\to\R_+$ is a sequence of smooth positive functions, such that:
\begin{itemize}
\item $g_q$ has unit volume and constant scalar curvature for all $q$;
\item $g_q\to g_\infty$ in the $C^s$-topology, with $s\ge2$;
\item $u_q\, g_q$ is a unit volume constant scalar curvature metric for all $q$;
\item $u_q\to 1$ in the Sobolev space $H^1(M)$.
\end{itemize}
Then, using $L^p$-estimates for solutions of second order elliptic equations, see \cite[Thm~9.14, p.~240]{GilTru01}, it follows that $\scal(u_q\, g_q)\to\scal(g_\infty)$ and $u_q\to 1$ in the Sobolev space $W^{s+1,p}(M)$, where $p=\frac{2m}{m+2}$ and $m=\dim M$. In particular, if $s>r+\frac{m}{2}$, then $u_q\to 1$ in $C^r(M)$. This applies to the sequence of bifurcating metrics in Theorem~\ref{thm:curvesbifurcating}, since we are assuming that the path $h'_t\in\mathcal H(\Sigma)$ is continuous with respect to the $C^r$-topology for all $r$.
\end{remark}

\section{Proof of Main Theorem}
\label{sec:proofs}

The proof of the Main Theorem in the Introduction is a direct application of Theorem~\ref{thm:curvesbifurcating} to the case $(N,g_N)=(\S^{m-2},\gr)$, $m\geq 5$. In order to verify that \eqref{eq:basicinequality} holds, notice that $n=\dim N=m-2$, and $\scal(\S^{m-2})=(m-2)(m-3)$, hence $\frac{\scal_N-2}{n+1}=m-4$. Furthermore, $\lambda_1(N)=\lambda_1(\S^{m-2})=m-2$. Thus, \eqref{eq:basicinequality} is satisfied for all $m\ge5$.

Applying Theorem~\ref{thm:curvesbifurcating} with arbitrary choices of $h_0\in\mathcal H(\Sigma)$ and continuous paths $h'_t\in \mathcal H(\Sigma)$ joining $h'_0$ to $h'_1$, one obtains the existence of uncountably many bifurcating branches $\{g_q\}_{q\in\mathds N}$ of constant scalar curvature metrics on $\S^{m-2}\times\Sigma$ that have fixed volume and are conformal to a product metric $g_{\text{round}}\oplus h'_{t_q}$. These branches consist of metrics with \emph{positive} constant scalar curvature, since the solution to the Yamabe problem (with volume normalization) is unique in conformal classes with nonpositive conformal Yamabe energy \cite[p.\ 175]{aubin}. From Remark~\ref{rem:convergence}, the values of these scalar curvatures converge to $\scal_{m,1}=(m-4)(m-1)$.

Furthermore, each metric $g_q$ in a bifurcating branch $\{g_q\}_{q\in\mathds N}$ is conformal, but not equal, to $g_{\text{round}}\oplus h'_{t_q}$. This follows from the fact that any two product metrics on a product manifold are conformal if and only if they are homothetic.\footnote{Notice that a metric of the form $g=\phi(x_1,x_2)(g_1\oplus g_2)$ is a product metric $g=g'_1\oplus g'_2$ on the manifold $M_1\times M_2$ if and only if the function $\phi\colon M_1\times M_2\to\R_+$ is constant.}
In particular, two distinct product metrics with the same volume cannot be conformal.
Therefore, $g_q$ cannot be product metrics on $\S^{m-2}\times\Sigma$. Thus, the pull-backs of $g_q$ to $\S^{m-2}\times \H^2$ are complete constant scalar curvature metrics which are conformal, but not equal, to the product metric $\gpr$.
This proves the Main Theorem in the Introduction.\qed

\begin{remark}\label{rem:multiplicity}
The usual notion of \emph{multiple} solutions to the (singular) Yamabe problem
in a conformal class $[g]=\{\phi\,g:\phi\colon M\to\R_+\}$ is that there exist distinct functions $\phi\colon M\to\R_+$ such that $\phi\,g$ has constant scalar curvature. Under this notion of multiplicity, the proof of our Main Theorem guarantees that each bifurcating branch above contains (countably many) pairwise distinct solutions to the Yamabe problem on $\S^{m-2}\times\Sigma$. Since there are continuous families of paths of metrics on $\S^{m-2}\times\Sigma$ that have bifurcating branches, we obtain uncountably many distinct solutions to the Singular Yamabe problem on $\S^m\setminus\S^1$.

In principle, some of these solutions are conformal factors that may give rise to \emph{isometric} metrics. Recall that two distinct metrics in the same conformal class may be isometric, via pull-back by a conformal diffeomorphism. For instance, unlike any other Einstein manifold, the round sphere $(\S^m,g_{\text{round}})$ has uncountably many metrics of constant scalar curvature equal to $\scal(g_{\text{round}})=m(m-1)$, that form a noncompact $(m+1)$-dimensional manifold.
However, all such metrics are isometric.\footnote{A metric in $[g_{\text{round}}]$ has constant scalar curvature if and only if it is the pull-back of $g_{\text{round}}$ by a conformal diffeomorphism of $(S^m,g_{\text{round}})$~\cite[\textsection 2]{schoen89}.
In particular, the moduli space of solutions is diffeomorphic to $\mathrm{Conf}(\S^m,g_{\text{round}})/\mathrm{Iso}(\S^m,g_{\text{round}})\cong\SO(m+1,1)_0/\SO(m+1)$.}
Thus, it is natural to ask whether our result implies the existence of infinitely many pairwise \emph{nonisometric} solutions to the Singular Yamabe problem on $\S^m\setminus \S^1$. Since we are dealing with periodic solutions, this is equivalent to asking whether bifurcating branches of constant scalar curvature metrics on $\S^{m-2}\times\Sigma$ conformal to $g_{\text{round}}\oplus h$ contain nonisometric metrics. Let $\{g^{(1)}_q\}_{q\in\mathds N}$ and $\{g^{(2)}_q\}_{q\in\mathds N}$ be bifurcating branches converging respectively to $g_{\text{round}}\oplus h^{(1)}$ and $g_{\text{round}}\oplus h^{(2)}$, with $h_1,h_2\in\mathcal H(\Sigma)$ distinct (i.e., nonisometric) hyperbolic metrics on $\Sigma$. Since the pull-back action of the diffeomorphism group of $\S^{m-2}\times\Sigma$ on the space of Riemannian metrics is proper (see, e.g.,~\cite[Thm 2.3.1]{Tro92}) and $g_{\text{round}}\oplus h^{(1)}$ and $g_{\text{round}}\oplus h^{(2)}$ are nonisometric, it follows that $g_n^{(1)}$ and $g^{(2)}_n$ are nonisometric for $n$ sufficiently large. Therefore, there are uncountably many pairwise nonisometric periodic solutions to the Singular Yamabe problem on $\S^m\setminus\S^1$. Finally, notice that these solutions can be further chosen to be periodic with respect to infinitely many different cocompact lattices $\Gamma$, for instance using the infinitely many possible choices of $\gen(\Sigma)=\gen(\H^2/\Gamma)\geq2$.
\end{remark}

\begin{remark}\label{rem:oldthmb}
A somewhat weaker nonuniqueness result (independent of bifurcation theoretic methods) follows as a by-product of the above proof and the solution to the classical Yamabe problem. Namely, for any $m\geq5$, we obtain uncountably many hyperbolic metrics $h\in\mathcal H(\Sigma)$ such that the Morse index $i(\gr\oplus h)$ is arbitrarily large, see Proposition~\ref{prop:morseidx}. Hence, the corresponding \emph{Yamabe metric}\footnote{i.e., the metric with the same volume as $\gr\oplus h$ at which the Hilbert-Einstein functional attains its \emph{global minimum} in the conformal class of $\gr\oplus h$.} $g_\text Y(h)$  on $\S^{m-2}\times\Sigma$ must be a different constant scalar curvature metric conformal to $\gr\oplus h$. Arguing as above, the pull-back of $g_\text Y(h)$ to $\S^{m-2}\times\H^2$ is a periodic solution with $\scal>0$ that is not locally isometric to $\gpr$. Note that $g_\text Y(h)\neq g_\text Y(h')$ if $h\neq h'$. Thus, there are uncountably many distinct periodic solutions to the singular Yamabe problem on $\S^m\setminus \S^1$.
\end{remark}

\end{document}